\title{On the metric dimension of incidence graph of M\"obius planes}
\date{}
\author{\'Akos Beke}
\newtheorem{theorem}{Theorem}[section]
\newtheorem{lemma}[theorem]{Lemma}
\newtheorem{proposition}[theorem]{Proposition}
\newtheorem{definition}[theorem]{Definition}
\newtheorem{example}[theorem]{Example}
\begin{document}

\maketitle

\begin{abstract}
We study the metric dimension and optimal split-resolving sets of the point-circle
incidence graph of
a M\"obius plane. We prove that the metric dimension of a M\"obius plane of order $q$ is
around $2q$, and that an optimal split-resolving set has cardinality between approximately
$5q$ and $2.5q\log q$. We also prove that a smallest blocking set of a M\"obius plane of
order $q$ has at most $2q(1 + \log(q + 1))$ points.
\end{abstract}

\section{Introduction}

The concept of metric dimension can be discussed in any metric
space, and it already appeared in 1953  \cite{AppOfDisGeo}.
In graph theory, resolving sets and metric dimension were first introduced
independently by Slater \cite{ResSetSlater}, and Harary and Melter
\cite{ResSetHararyMelter}.
The topic has been studied in several articles, and many results have been gathered
in \cite{BasMetDim} and \cite{ResSetMetDim}.
Since then, the metric dimension of various graph classes have been studied, including numerous
graphs arising from finite geometries
\cite{Bayley,MetDimAffBiaffGenQuad,ResSetHigDimProj,ResSetHigDimProjPL,HegTakResSet}.

In this paper we study the point-circle incidence graphs of M\"obius planes and give
lower and upper bounds for
the metric dimension and for the size of smallest split-resolving sets of the incidence graphs of
M\"obius planes.

\begin{definition} Let $G=(V, E)$ be a graph. We say that a set $W\subseteq V$ is \emph{resolved
by the set $S\subseteq V$} if for any two different vertices $v,u\in W$ there
is a vertex $s\in S$ such that $d(v,s)\ne d(u,s)$.

$S$ is called a \emph{resolving set} of $G$ if it resolves the set $V$. The cardinality of
a smallest resolving set is called the \emph{metric dimension of the graph} and it is
denoted by $\mu(G)$.

Let $G=(V,E)$ be a bipartite graph with vertex classes $A$ and $B$. We say that $S$ is a
\emph{split-resolving set} if $S\cap A$ resolves $B$ and $S\cap B$ resolves
$A$.
\end{definition}

Note that if $G$ is a bipartite graph with vertex classes $A$ and $B$ and the set $S$ resolves the
classes, then $S$ is also a resolving set, because if $a\in A$
and $b\in B$, then for any element $s$ of $S$, $d(a,s)$ is odd if and only if $d(b,s)$ is
even.

In some cases we will reformulate the problem into a blocking problem of a hypergraph.
A blocking set of a hypergraph is a subset of vertices such that every edge has at least
one common vertex with the subset. The goal is to determine the size of a smallest blocking set.
The fact that a hyperedge $e\in E$ is blocked can be considered as the constraint
\[\sum_{v\in e}x_v\ge1\]
holds, where the variables $x_v$ correspond to the vertices $v$ of the hypergraph.
The objective function of this LP problem is
\[\sum_{v\in V}x_v\to \min.\]
If there are constraints for all variables $x_v$ such that $x_v\in\{0,1\}$, then a
solution of the LP corresponds to a solution for the blocking problem. If we change these
constraints into $x_v\ge0$, then the solution of the LP is called the fractional solution
of the blocking problem.

We will consider points and circles of a M\"obius plane as vertices and hyperedges of a
hypergraph. The blocking set of this hypergraph is called a blocking set of the M\"obius
plane. These kind of blocking sets have been studied in several articles (see
\cite{LowBoundBlockSet,RanConDenRes,GrefRos,BlockInvPlan,BlockSetFinPlanSpac}).
We will give an upper bound to an optimal blocking set in Theorem \ref{BlockingUpperBound}.

We will use the following theorem to give upper bounds for the considered combinatorial
problems.
\begin{theorem}[Lov\'asz \cite{Lovasz}]\label{LovThm} Let $\tau$ denote the optimum of the blocking set
problem of a hypergraph $H=(V,E)$. If $B\subseteq V$ is an optimal
blocking set of the hypergraph ($\tau=|B|$), then
\[\tau<\tau^*(1+\log(d)),\]
where $d$ is the greatest degree of the hypergraph, that is
\[d=\max\{|\{ e\in E\ :\ v\in e \}|\ :\ v\in V\}\]
and $\tau^*$ is the fractional optimal
solution.
\end{theorem}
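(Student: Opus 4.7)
The plan is to analyse the natural greedy algorithm that repeatedly picks a vertex of maximum residual degree, and to compare its total cost to a fixed optimal fractional solution $x^*$ with $\sum_v x_v^* = \tau^*$.

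First I would set up a bookkeeping weight on edges. Let $v_1,\dots,v_\tau$ be the vertices chosen by the greedy run, and let $d_j$ be the number of edges first blocked at step $j$ by $v_j$. Assign weight $w(e) = 1/d_j$ to each edge that is blocked for the first time at step $j$. This partitions $E$, and the weights sum to exactly $\tau$, so it suffices to prove the estimate $\sum_{e\in E} w(e) \leq \tau^*(1+\log d)$.

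The core step, and the place where the logarithm enters, is the \emph{local estimate} $\sum_{e\ni v} w(e) \leq H(\deg(v)) \leq 1 + \log d$ for every vertex $v\in V$, where $H(k)=1+\tfrac12+\cdots+\tfrac1k$. To prove it, I would order the edges $e_1,\dots,e_k$ through $v$ by the step in which greedy first blocks them. If $e_l$ is blocked at step $j$, then immediately before step $j$ the edges $e_l,e_{l+1},\dots,e_k$ are all still unblocked and incident to $v$, so $v$ has residual degree at least $k-l+1$; by the greedy rule we then have $d_j\geq k-l+1$, hence $w(e_l)\leq 1/(k-l+1)$, and summation over $l$ gives the harmonic bound. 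I expect the main obstacle to be making this ordering argument airtight when several edges are blocked simultaneously at the same step (including the case $v=v_j$, where all remaining incident edges get blocked at once), but this is just a matter of fixing a tie-breaking convention.

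The conclusion is then a one-line LP swap. Using the feasibility inequality $\sum_{v\in e} x_v^* \geq 1$ for every edge $e$,
\[
\tau \;=\; \sum_{e\in E} w(e) \;\leq\; \sum_{e\in E} w(e)\sum_{v\in e} x_v^* \;=\; \sum_{v\in V} x_v^* \sum_{e\ni v} w(e) \;\leq\; (1+\log d)\sum_{v\in V} x_v^* \;=\; \tau^*(1+\log d).
\]
The strict inequality in the statement comes from $H(d) < 1+\log d$ whenever $d\geq 2$, so no extra care is needed for the strictness beyond the local estimate.
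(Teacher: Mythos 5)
The paper does not prove this statement; it is quoted directly from Lov\'asz's 1975 paper, and your proposal is precisely the standard greedy/charging argument from that source, so the approach matches and the proof is correct. Two small points to tidy: the greedy run produces some number $g$ of vertices with $\tau\le g=\sum_e w(e)$, so you should not label the greedy vertices $v_1,\dots,v_\tau$ (the greedy set need not be optimal, only an upper bound for $\tau$); and the strict inequality genuinely requires $d\ge 2$, since for $d=1$ one has $H(1)=1+\log 1$ and equality can occur, an edge case the stated theorem silently ignores.
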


We will use this theorem by constructing a hypergraph such that a subset of its vertices
is a blocking set of the hypergraph if and only if it resolves a particular subset of the
graph.

\section{M\"obius planes and their incidence graphs}\label{MobiusSection}

\begin{definition}\label{mobiusdef}
Let $\mathcal M=(\mathcal P, \mathcal Z)$ a hypergraph. We call this hypergraph
a \emph{M\"obius plane}, the elements of $\mathcal P$ points and the elements of
$\mathcal Z$ circles if the following axioms
hold:
\begin{enumerate}
  \item For every three pairwise different points there is exactly one circle through them.
  \item If $z\in\mathcal Z$, $P\in z$ and $Q\in\mathcal P\diagdown z$, there is exacly
        one circle $z'$ through $P$ and $Q$ such that $z\cap z'=\{P\}$.
  \item There is at least one circle, and every circle has at least three points.
  \item For every circle $z$ there is at least one point $P$ such that $P\not\in z$.
\end{enumerate}
If $|\mathcal P|<\infty$ then $\mathcal M$ is a \emph{finite M\"obius plane}.
\end{definition}

In a finite M\"obius plane, every circle has the same number of points.
If a circle has $q+1$ points, then $q$ is called
the \emph{order} of the M\"obius plane. In this case
there are $q^2+1$ points and
$q(q^2+1)$ circles in the plane, and
there are $q(q+1)$ circles through every point.
For a point $P\in\mathcal P$ let us define the sets
\[\mathcal P'=\mathcal P\diagdown\{P\},\hspace{1cm} \mathcal L=\{z\diagdown\{P\}\ :\ P\in z\in \mathcal Z\}.\]
Then the hypergraph $(\mathcal P', \mathcal L)$ is an affine plane, called the \emph{affine
residue at point $P$}.
More details and constructions of M\"obius planes can be found in \cite{DemBook}
and \cite{KissSzonyiBook}.

We give a simple example for the smallest M\"obius plane:

\begin{example}
By axioms 3 and 4, there is at least one circle $z$, three points on $z$ and a fourth
point not on $z$. By axioms 2 and 3 there are at least two points not on $z$. So there are
at least 5 points, and if there are only three points on $z$ then, by axiom 1, there are
${5 \choose 3}$ circles.

Let $\mathcal P=\{1,2,3,4,5\}$ and $\mathcal Z=\{z\subseteq \mathcal P\ :\ |z|=3\}$.

It is easy to see that $(\mathcal P, \mathcal Z)$ is a M\"obius plane of order 2, it has
five points and ten circles.
\end{example}

From now on let
$\mathcal{M}(q)=(\mathcal P, \mathcal Z)$ be a M\"obius plane of order $q$.
\\
We introduce some notation:
\begin{itemize}
  \item The set of circles which go through a point $P$ is denoted by $[P]$.
  \item For any three points $A$, $B$ and $C$ we denote the circle through them by $ABC$.
  \item We say the circle $a$ is \emph{skew to the circle $b$} if they have no common points.
  \item We say the circle $a$ is \emph{tangent to the circle $b$} if they have one common point.
\end{itemize}

In the next lemma we summarize combinatorial statements that are important for us.
\begin{lemma}\label{LemmaOfCordinalities}
Let $z\in\mathcal Z$ be a circle.
\begin{enumerate}
  \item There are $q+1$ circles through two distinct points.
  \item There are $\frac{(q+1)q^2}2$ circles with two common points with $z$.
  \item There are $q-1$ circles tangent to $z$ through a point of $z$.
  \item There are $q^2-1$ circles tangent to $z$.
  \item There are $\frac{q^3-3q^2+2q}2$ circles skew to $z$.
  \item There are $\frac{q^3+3q^2-2}2$ circles which have one or two common points with $z$.
\end{enumerate}
\end{lemma}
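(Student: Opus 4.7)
My plan is to prove the six items in order, reusing each one in the next, and to rely on two basic consequences of the axioms: (a) axiom 1 forces any two distinct circles to share at most two points, and (b) the parameters quoted in the paragraph preceding the lemma, namely $|\mathcal P|=q^2+1$, $|\mathcal Z|=q(q^2+1)$, $|z|=q+1$, and $|[P]|=q(q+1)$.

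For (1), I would double-count pairs $(C,c)$ with $C\in\mathcal P\setminus\{A,B\}$ and $c$ a circle containing $\{A,B,C\}$. Axiom 1 gives exactly one such $c$ per $C$, so the number of pairs is $q^2-1$; on the other hand each circle through $A,B$ contributes its remaining $q-1$ points, yielding $(q^2-1)/(q-1)=q+1$ circles. For (2), I would count pairs $(\{A,B\},c)$ with $\{A,B\}\subseteq z\cap c$ and $c\ne z$: there are $\binom{q+1}{2}$ choices for the pair and, by (1), $q$ remaining circles through it, so $\binom{q+1}{2}\cdot q=\frac{(q+1)q^2}{2}$ pairs; since two distinct circles share at most two points (axiom 1), each such $c$ is counted exactly once, and (2) follows.

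For (3), I would fix $P\in z$ and use axiom 2: for each $Q\in\mathcal P\setminus z$, there is a unique tangent to $z$ at $P$ passing through $Q$. This assignment $Q\mapsto c_Q$ is a surjection onto the set of tangents to $z$ at $P$, and the fibres are exactly the $q$ points of $c_Q\setminus\{P\}$ (all of which lie off $z$ since $c_Q$ is tangent at $P$). Hence the number of tangents at $P$ is $(|\mathcal P|-|z|)/q=(q^2-q)/q=q-1$. Item (4) is then immediate by summing over the $q+1$ points of $z$, noting that each tangent circle contains exactly one point of $z$ and is therefore counted once.

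For (5), I would classify all circles as $z$ itself, secants (exactly two common points), tangents, or skew, and solve for the last class using the total $|\mathcal Z|=q(q^2+1)$ and the counts from (2) and (4); the arithmetic reduces to $\frac{q^3-3q^2+2q}{2}$. Item (6) is then just the sum of the secant and tangent counts. I do not anticipate a genuine obstacle: the only subtle points are remembering to exclude $z$ in (2) and verifying in (3) that the bijection-counting really does use the uniqueness half of axiom 2 (not just the existence half).
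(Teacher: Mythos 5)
Your proposal is correct, and for five of the six items it follows essentially the same route as the paper: the divisibility/double-count for (1), the pair count $\binom{q+1}{2}\cdot q$ for (2) (you are slightly more careful than the paper in pointing out that axiom 1 prevents a circle from being counted twice), summing over the points of $z$ for (4), and subtraction from the totals for (5) and (6). The one genuine difference is item (3). The paper counts tangents at $P\in z$ by subtraction: of the $q(q+1)$ circles through $P$, one is $z$ and $q\cdot q$ are secants (a second point $Q\in z$ chosen in $q$ ways, then $q$ circles through $P,Q$ other than $z$), leaving $q-1$. You instead invoke axiom 2 directly: each $Q\notin z$ determines a unique tangent to $z$ at $P$ through $Q$, and the fibre of a tangent $c$ is $c\setminus\{P\}$ of size $q$, giving $(q^2-q)/q=q-1$. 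Both are sound; your version has the merit of making explicit use of the uniqueness clause of axiom 2 and avoids having to check (as the paper's subtraction implicitly requires) that no secant through $P$ is counted twice over the choices of $Q$. The paper's version needs only the circle-through-$P$ count and axiom 1. Either argument is acceptable, and your arithmetic checks out throughout.
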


\begin{proof}\ 
\begin{enumerate}
  \item Let $P$ and $Q$ be two different points and $H:=\mathcal P\diagdown\{P,Q\}$.
        Consider the circles through $P$ and $Q$ and let $k$ denote the number of such
        circles. All of them covers $q-1$ points of $H$. By the first axiom, every point
        $X\in H$ is covered by exactly one of them. Hence
        \[k(q-1)=|H|=q^2-1.\]
        So there are $k=q+1$ circles through $P$ and $Q$.
  \item For every two points on $z$ there are $q$ circles through them different from $z$.
        Counting them we get ${{|z|}\choose2 }q=\frac{(q+1)q^2}2$ such circles.
  \item Let $P$ be a point on $z$. There are $q(q+1)$ circles through the point $P$. One
        of them is $z$. We can choose another point $Q$ on $z$ in $q$ different ways, and
        there are $q$ circles through $P$ and $Q$ different from $z$. Hence the number of 
        circles tangent to $z$ on $P$ is
        \[q(q+1)-1-qq=q-1.\]
  \item The circle $z$ has $q+1$ points and, by the previous statement, there are $q-1$ circles tangent to $z$
        on each of them. Thus there are $(q+1)(q-1)=q^2-1$ circles tangent to $z$.
  \item There are $q(q^2+1)$ circles, one of them is $z$ itself. Subtracting the number
        of circles which have at least one common point with $z$, we get
        \[q(q^2+1)-1-(q^2-1)-\frac{(q+1)q^2}2=\frac{q^3-3q^2+2q}2.\]
  \item Adding the number of circles with one or two common points with $z$ we get
        \[\frac{(q+1)q^2}2+q^2-1=\frac{q^3+3q^2-2}2.\]
\end{enumerate}

\end{proof}

\begin{definition}\label{incgraphdef}
 The \emph{point-circle incidence graph of a M\"obius plane}
$\mathcal{M}(q)$ is $G=(V,E)$, where
$V:=\mathcal P\cup \mathcal Z$ and $E:=\{\{P,z\}\ :\ P\in z \}$.
\end{definition}

This is obviously a bipartite graph with vertex classes $\mathcal P$ and $\mathcal Z$.
The metric dimension of $G$ will be considered as the metric dimension of the geometry and
we use the notation $\mu(\mathcal M(q))$ instead of $\mu(G)$.
For every $P,Q\in\mathcal P$ and $a, b\in\mathcal Z$ we have
\[d(P,Q)=\left\{
  \begin{array}{ll}
    0 & \textrm{ if } P=Q\\
    2 & \textrm{ if } P\ne Q
  \end{array}
\right.
\hspace{2cm}d(a, b)=\left\{
  \begin{array}{ll}
    0 & \textrm{ if } a=b\\
    2 & \textrm{ if } a\cap b\ne\emptyset\\
    4 & \textrm{ if } a\cap b=\emptyset
  \end{array}
\right.\]
\[d(a,P)=\left\{
  \begin{array}{ll}
    1 & \textrm{ if } P\in a\\
    3 & \textrm{ if } P\not\in a
  \end{array}
\right.\]

\begin{definition}
For a subset $S\subseteq V$ we call the circles \emph{outer circles} and the points \emph{outer
points} if they are not elements of $S$.
\end{definition}

\section{Metric dimension of M\"obius planes}

We give a construction that resolves the set of points, and then we will use Theorem
\ref{LovThm} to find an upper bound of minimal cardinality of a set that resolves the
set of circles.

Let us construct a hypergraph $H=(V,E')$ with the same vertex set as $G$. For every two
different circles $a$ and $b$ we construct a hyperedge $e_{a, b}$ which contains all
vertices $v$ for which $d(v,a)\ne d(v,b)$. That means
\begin{itemize}
  \item $a,b\in e_{a, b}$,
  \item a circle $c\in\mathcal Z\diagdown\{a,b\}$ is an element of $e_{a, b}$ if and only
        if $c$ has a common point with $a$ or $b$ but not with both,
  \item a point $P\in\mathcal P$ is in $e_{a, b}$ if and only if $P$ is incident with $a$
        or $b$ but not with both.
\end{itemize}
By definition, $e_{a,b}$ denotes the same hyperedge as $e_{b,a}$.
Any subset of the graph $G$ resolves the set of circles if and only if it is a blocking
set of the hypergraph $H$.

\begin{lemma}\label{HyperedgeLowerBound}
There are at least $\frac{q^3}2-3q^2+\frac{11q}2-1$ vertices in any hyperedge of $H$.
\end{lemma}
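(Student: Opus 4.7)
The plan is to evaluate $|e_{a,b}|$ case by case on $k := |a \cap b| \in \{0,1,2\}$. The hyperedge contains $a$ and $b$ themselves, the points of the symmetric difference $a \triangle b$ (of size $2(q+1) - 2k$), and the circles $c \neq a, b$ that meet exactly one of $a$ and $b$. Writing $A = a \setminus b$ and $B = b \setminus a$, a circle $c \neq a, b$ meets $a$ but not $b$ precisely when $c$ is skew to $b$ and has a point in $A$; the count of circles meeting $b$ but not $a$ is analogous.

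The substantive sub-step is a pointwise count: for any $P \notin b$, the number of circles through $P$ skew to $b$ equals $\frac{(q+1)(q-2)}{2}$. I would show this by double counting the incidences $(Q,c)$ with $Q \in b$ and $P,Q \in c$. Among the $q(q+1)$ circles through $P$, axiom 2 of Definition \ref{mobiusdef} gives exactly $q+1$ tangents to $b$ (one per $Q \in b$), while part 1 of Lemma \ref{LemmaOfCordinalities} yields $(q+1)^2$ such incidences in total. Hence $q(q+1)/2$ of the circles through $P$ are secants to $b$, leaving $(q+1)(q-2)/2$ skew ones.

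Summing this quantity over $P \in A$ and dividing by $2$ (since each circle skew to $b$ meets $A \subseteq a$ in at most two points) gives a lower bound of $\frac{|A|(q+1)(q-2)}{4}$ on the number of circles meeting $a$ but not $b$, with a small correction in the skew case ($k=0$) to exclude $a$ itself. By symmetry the circles meeting $b$ only satisfy the same bound with $B$ in place of $A$. Assembling the three contributions and minimizing over the three cases, the bound is tightest for $k = 2$, where $|A| = |B| = q-1$ and one obtains
\[
|e_{a,b}| \ \geq\ 2 + 2(q-1) + 2 \cdot \frac{(q-1)(q+1)(q-2)}{4} \ =\ \frac{q^3 - 2q^2 + 3q + 2}{2}.
\]
This exceeds the claimed bound $\frac{q^3}{2} - 3q^2 + \frac{11q}{2} - 1$ by $2(q-1)^2 \geq 0$, finishing the proof. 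The only source of looseness is the divide-by-$2$ in the pointwise-to-global step (essentially ignoring whether a circle meeting $a$ in two points is double-counted); the main obstacle would be sharpening this, but the target bound is slack enough that this estimate already suffices.
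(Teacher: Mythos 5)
Your proof is correct, and it takes a genuinely different route from the paper's. Both arguments lower-bound $|e_{a,b}|$ by the two circles $a,b$, the $2(q+1)-2k$ points of $a\triangle b$, and a family of circles meeting one of $a,b$ while skew to the other; but the families and the counting mechanisms differ. The paper restricts attention to circles having two common points with $a$ (both outside $a\cap b$) and isolates the skew-to-$b$ ones by subtracting crude upper bounds, namely $(q-1)^2$ for tangents to $b$ and $\frac12\binom{q-1}{2}(q-1)$ for secants to $b$. You instead establish the exact pointwise count that a point off $b$ lies on $\frac{(q+1)(q-2)}{2}$ circles skew to $b$ --- a clean fact, consistent with part 5 of Lemma \ref{LemmaOfCordinalities} (summing it over the $q^2-q$ external points and dividing by $q+1$ recovers $\frac{q^3-3q^2+2q}{2}$) but not stated in the paper --- and then pass from points of $A=a\setminus b$ to circles by dividing by $2$. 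Your details check out: the secant count $\frac{q(q+1)}{2}$ through an external point follows from $(q+1)+2s=(q+1)^2$; the worst case is indeed $k=2$, since the $k=1$ and $k=0$ totals come to $\frac{q^3-q^2+2q+4}{2}$ and (after removing $a$ itself) $\frac{q^3-q+4}{2}$, both larger; and the margin over the stated bound is $2(q-1)^2\ge0$ as you claim. Your approach buys a strictly stronger lemma (sharp for $q=2$, where $|e_{a,b}|=4$ when $a$ and $b$ meet twice), which would propagate to slightly better constants downstream; the paper's version has the minor advantage of using no incidence count beyond Lemma \ref{LemmaOfCordinalities}. The one step to make fully explicit in a write-up is the $k=0$ correction: there $a$ itself is skew to $b$ and contributes $|A|=q+1$ to the incidence sum rather than at most $2$, so it must be removed before dividing --- you flag this, and the totals above already account for it.
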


\begin{proof}
Let $e_{a,b}$ be a hyperedge. There are three cases depending on how many common points
$a$ and $b$ have.

\begin{figure}[h]
\begin{center}
\begin{tikzpicture}
    \filldraw[black] (0.4,2.57) circle (2pt);
    \node at (0.5,2.8) {$P$};

    \filldraw[black] (-.15,0.6) circle (2pt);
    \node at (-.25,.3) {$Q$};

    \draw (-1.3,2) circle (1.8cm);
    \node at (-3,3) {$a$};

    \draw ( 1.3,1.25) circle (1.6cm);
    \node at (3,2) {$b$};
    
    \draw (-1.7,1.2) circle (1.4cm);
    \node at (-2.8,.1) {$c$};

    \draw (0.3,2.8) circle (1cm);
    \node at (1.2,3.5) {$d$};
\end{tikzpicture}
\caption{}\label{HypGrEdgeBoundFig}
\end{center}
\end{figure}
First let $a$ and $b$ be two circles which have two common points $P$ and $Q$, and let
$Z$ denote the set of circles with two common points with $a$ containing neither $P$ nor
$Q$. Then $|Z|={{q-1}\choose2 }q$.
Among them, there are at most $(q-1)^2$ circles tangent to $b$ (like circle $c$ in Figure
\ref{HypGrEdgeBoundFig}).
Since
\[\Big|\{(\{A,B\},C)\ :\ A,B\in b,\ A\ne B,\ C\in a,\ ABC\in Z,\ A,B,C\not\in\{P,Q\}\}\Big|\le\]
\[\le\frac12{{q-1}\choose2 }(q-1),\]
we have at most
$\frac12{{q-1}\choose2 }(q-1)$ circles in $Z$ with two common points with $b$ (like circle
$d$ in Figure \ref{HypGrEdgeBoundFig}). So there are at
least ${{q-1}\choose2 }q-(q-1)^2-\frac12{{q-1}\choose2 }(q-1)$
circles with two common points with $a$ skew to $b$.
There are at least the same number of circles that have two common points with $b$ skew
to $a$. All of them are elements of the hyperedge $e_{a,b}$. The points of $a$ and $b$
except $P$ and $Q$ and the circles $a$ and $b$ are in $e_{a,b}$, too. Therefore
\[|e_{a,b}|\ge2\left({{q-1}\choose2 }q-\frac12{{q-1}\choose2 }(q-1)-(q-1)^2\right)+2(q-1)+2\]
\[=\frac{q^3}2-3q^2+\frac{11q}2-1.\]

If $a$ and $b$ have one common point, then in the same way the lower bound for the edge is
\[|e_{a,b}|\ge2\left({{q}\choose2 }q-\frac12{q \choose2 }q-q(q-1)\right)+2q+2=\frac{q^3}2-\frac{5q^2}2+4q+2.\]

Finally, if $a$ and $b$ have no common points, then the lower bound is
\[|e_{a,b}|\ge\left({{q+1}\choose2 }q-\frac12{{q+1}\choose2 }(q+1)-(q+1)(q-1)\right)+2(q+1)+2=\]
\[=\frac{q^3}2-2q^2+\frac{3q}2+6.\]

It is easy to see that the first case is the smallest of the three lower bounds.
\end{proof}

\begin{theorem}\label{MetDimUpperBound} If $q\ge 4$, then
\[\mu(\mathcal{M}(q))\le2q-2+\left(2+\frac{14q^2-20q+6}{q^3-6q^2+11q-2}\right)\left(1+\log\left(\frac{q^6}4\right)\right).\]
If $q\ge 156$, then
\[\mu(\mathcal{M}(q))\le2q+12\log(q).\]
\end{theorem}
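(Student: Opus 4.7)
I plan to construct a resolving set $S=S_1\cup S_2$ with $S_1\subseteq\mathcal Z$ resolving the points and $S_2\subseteq V$ resolving the circles; the parity remark after Definition~\ref{incgraphdef} then makes $S$ resolve the whole incidence graph, so $\mu(\mathcal M(q))\le|S_1|+|S_2|$.

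For $S_1$ I would pass to the affine residue at a fixed point $R\in\mathcal P$: circles through $R$ become lines of an affine plane of order $q$, and points of $\mathcal P\setminus\{R\}$ become affine points. Taking $q-1$ lines from each of two parallel classes of this affine plane yields a set of $2q-2$ lines in which every pair of affine points is separated (two coordinates uniquely determine all but possibly one special point, which is then distinguished by having the all-zero incidence pattern). Pulling this back to the M\"obius plane gives $2q-2$ circles through $R$ that resolve $\mathcal P\setminus\{R\}$; for a pair $(R,Q)$ with $Q\ne R$ one only needs some chosen circle to avoid $Q$, and since each affine point lies on at most $2$ of the chosen lines while $2q-2>2$ for $q\ge 4$, this is automatic.

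For $S_2$ I apply Theorem~\ref{LovThm} to the hypergraph $H=(V,E')$ from the preceding discussion: a blocking set of $H$ is exactly a set of vertices resolving $\mathcal Z$. The uniform assignment $x_v=2/(q^3-6q^2+11q-2)$ is a feasible fractional blocking set by Lemma~\ref{HyperedgeLowerBound}, and summing over $|V|=q^3+q^2+q+1$ vertices gives
\[
\tau^*\le\frac{2(q^3+q^2+q+1)}{q^3-6q^2+11q-2}=2+\frac{14q^2-20q+6}{q^3-6q^2+11q-2}.
\]
For the maximum degree of $H$, a point $P$ lies in $q(q+1)\cdot q^2(q-1)$ hyperedges (one circle through $P$ paired with one missing $P$), while a circle $c$ lies in at most $|I(c)|\cdot|S(c)|+|\mathcal Z|-1$ hyperedges, where $I(c)$ and $S(c)$ are the sets of circles meeting and skew to $c$; plugging in the counts from Lemma~\ref{LemmaOfCordinalities} one checks that both are bounded by $q^6/4$ for $q\ge 4$. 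Hence $|S_2|\le\tau<\tau^*(1+\log(q^6/4))$, and combining with $|S_1|\le 2q-2$ yields the first displayed bound.

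The second bound for $q\ge 156$ is a numerical cleanup. Writing $\log(q^6/4)=6\log q-\log 4$, the inequality to check reduces to
\[
\frac{14q^2-20q+6}{q^3-6q^2+11q-2}\bigl(1+6\log q-\log 4\bigr)\le 2\log 4,
\]
and both sides are monotone, so one only needs to test that this holds at $q=156$ (the left-hand side is about $2.76$ there, just under $2\log 4\approx 2.77$). The main obstacle is the construction of $S_1$: forcing the count down to $2q-2$ while still separating every pair of points requires a careful choice of parallel classes and a verification that the distinguished residue point is not tied with any other point, whereas the Lov\'asz step is essentially automatic once Lemma~\ref{HyperedgeLowerBound} and the degree count of Lemma~\ref{LemmaOfCordinalities} are in hand.
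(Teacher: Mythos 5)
Your proposal is correct and follows essentially the same route as the paper: the same $2q-2$ circles obtained from two parallel classes (minus one line each) of the affine residue to resolve the points, and the same Lov\'asz-type fractional covering argument on the hypergraph $H$, with the identical fractional solution, objective value, and degree bounds, to resolve the circles. The only addition is your explicit numerical check of the $q\ge156$ bound, which the paper asserts without detail and which your computation confirms.
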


\begin{proof}
We give a construction that resolves the set of points if $q$ is at least 3.
Let $P$ be a point and let us consider the affine residue at point $P$. Let $P_1$ and
$P_2$ be two different parallel classes in this affine plane (See Figure
\ref{ResSetForPoints}).

These are circle classes in the
M\"obius plane such that any two circles in a class are tangent to each other in the
point $P$. Let $a\in P_1$ and $b\in P_2$. We show that the set $S_1=P_1\cup P_2\diagdown\{a,b\}$
resolves the set of points.
\begin{figure}[h]
\begin{center}
\begin{tikzpicture}
    \draw (-.8,.4) to[out=160,in=340] (-.5,1.4);
    \draw (-.5,1.4) to[out=340,in=160] (.9,2);
    \node at (-1.2,1.5) {$P_1\diagdown\{a\}$};

    \draw (-.4,0) to[out=290,in=110] (1.5,-.5);
    \draw (1.5,-.5) to[out=70,in=250] (3.4,0);
    \node at (1.5,-.8) {$P_2\diagdown\{b\}$};

    \draw [gray,dashed](1,2) to[out=45,in=160] (6,3);
    \draw [gray,dashed](-1,0) to (1,2);

    \draw (2,2) to[out=45,in=170] (6,3);
    \draw (0,0) to (2,2);

    \draw (3,2) to[out=45,in=180] (6,3);
    \draw (1,0) to (3,2);

    \draw (4,2) to[out=45,in=190] (6,3);
    \draw (2,0) to (4,2);

    \draw (5,2) to[out=45,in=200] (6,3);
    \draw (3,0) to (5,2);

    \draw [gray,dashed](5,.2) to[out=0,in=290] (6,3);
    \draw [gray,dashed](-1,.2) to (5,.2);

    \draw (5,.6) to[out=0,in=280] (6,3);
    \draw (-.6,.6) to (5,.6);

    \draw (5,1) to[out=0,in=270] (6,3);
    \draw (-.2,1) to (5,1);

    \draw (5,1.4) to[out=0,in=260] (6,3);
    \draw (.2,1.4) to (5,1.4);

    \draw (5,1.8) to[out=0,in=250] (6,3);
    \draw (.6,1.8) to (5,1.8);

    \filldraw[black] (6,3) circle (2pt);
    \node at (6.2,3.2) {$P$};

\end{tikzpicture}
\caption{}\label{ResSetForPoints}
\end{center}
\end{figure}

Let $A,B\in\mathcal P\diagdown\{P\}$ be two different points. If $ABP\not\in P_1$ then $A$
and $B$ lie on two different circles of $P_1$, and if $ABP\in P_1$ then $A$ and $B$ lie
on two different circles of $P_2$.
Since $A$ is on at most two lines of $S_1$ and
the point $P$ lies on every circle of $S_1$ and $|S_1|=2q-2>2$
thus there is a circle incidence with
$P$ but not with $A$.
Therefore the set of points is resolved by the set $S_1$,
and this set has $2q-2$ elements.

If $z_1,z_2\in\mathcal Z\diagdown (P_1\cup P_2)$ and they are circles of the affine
residue at point P, then for any $s\in S_1$,
$d(z_1,s)=d(z_2,s)=2$. So the set
$S_1$ does not resolve the set $\mathcal Z$. Using
Theorem \ref{LovThm} we prove that there is a set $S_2$ with at most roughly $12\log(q)$
vertices that resolves the set of circles.

Let $a$ and $b$ be two different circles. Setting all the variables
$\frac2{q^3-6q^2+11q-2}$, by Lemma \ref{HyperedgeLowerBound} all the constraints
hold:
\[\sum_{v\in e_{a,b}}x_v\ge\left(\frac{q^3}2-3q^2+\frac{11q}2-1\right)\frac2{q^3-6q^2+11q-2}=1.\]
The object value is
\[\tau^*\le\frac2{q^3-6q^2+11q-2}(q^3+q+q^2+1)=2+\frac{14q^2-20q+6}{q^3-6q^2+11q-2}.\]
By Lemma \ref{LemmaOfCordinalities}, for every circle $x$ there are $\frac{q^3-3q^2+2q}2$
circles skew to $x$ and there are $\frac{q^3+3q^2-2}2$ circles which have one or two
common points with $x$. Also, $x$ is an element of the edge $e_{a,x}$ for every
$a\in\mathcal Z\diagdown\{x\}$. Therefore, in the hypergraph $H$, the degree of a circle
is
\[\frac{q^3-3q^2+2q}2\cdot\frac{q^3+3q^2-2}2+q^3+q-1=\]
\[\frac{q^6}4-\frac{7q^4}4+2q^3+\frac{3q^2}2-1<\frac{q^6}4.\]
It is easy to see that the degree of a point is $(q^2+q)(q^3-q^2)$. If $q\ge4$ then the degree
of a circle is greater then the degree of a point.
By Theorem \ref{LovThm}, there is a set $S_2$ with cardinality less than
\[\left(2+\frac{14q^2-20q+6}{q^3-6q^2+11q-2}\right)\left(1+\log\left(\frac{q^6}4\right)\right)\]
that resolves the set of circles, so the set $S=S_1\cup S_2$ is a resolving set.
\end{proof}

The given upper bound of the metric dimension is approximately $2q$. This approximation
holds for the lower bound too:

\begin{theorem}\label{MetDimLowerBound}
\[
\mu(\mathcal{M}(q))\ge \left\lceil 2q-4 + \frac{8}{q+2}\right\rceil \geq 2q-3. 
\]
Moreover, if $q\ge156$, then every optimal resolving set for $\mathcal{M}(q)$ contains at least $2q-4$ circles.
\end{theorem}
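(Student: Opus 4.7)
The plan is to lower-bound $|S|$ for an arbitrary resolving set $S = S_P \cup S_Z$ (with $S_P \subseteq \mathcal{P}$, $S_Z \subseteq \mathcal{Z}$, and $m := |S_P|$, $k := |S_Z|$) by analyzing how many points can be distinguished by $S_Z$ alone. For each point $P \in \mathcal{P}$ define its characteristic set $\chi(P) := \{z \in S_Z : P \in z\}$. Two points $P \ne Q$ both outside $S_P$ can only be resolved by some $z \in S_Z$ containing exactly one of them, since any $R \in S_P$ satisfies $d(R,P) = d(R,Q) = 2$. Thus $\chi$ restricted to $\mathcal{P} \setminus S_P$ is injective, and setting $t := |\chi(\mathcal{P})|$ yields $m \ge (q^2 + 1) - t$, so
\[
|S| \ge k + (q^2 + 1) - t.
\]

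The heart of the argument is an upper bound on $t$ in terms of $k$. Decompose $t = x_0 + x_1 + x_{\ge 2}$ by the size of the characteristic set; clearly $x_0 \le 1$ and $x_1 \le k$. Since any two distinct circles of a M\"obius plane share at most $2$ points (Lemma \ref{LemmaOfCordinalities}), every characteristic set of size $\ge 2$ has at most $2$ preimages, so $x_{\ge 2} \le n_{\ge 2}$, where $n_{\ge 2}$ counts points lying on at least two circles of $S_Z$. Writing $\sigma(P) := |\chi(P)|$, double-counting gives the two identities
\[
\sum_{P \in \mathcal{P}} \sigma(P) = k(q+1), \qquad \sum_{P \in \mathcal{P}} \binom{\sigma(P)}{2} = \sum_{\{z_1,z_2\} \subseteq S_Z} |z_1 \cap z_2| \le k(k-1).
\]
The elementary bound $|S| \ge q^2 - kq$ coming from the all-zero class (using $n_0 \ge q^2 + 1 - k(q+1)$) already settles the range $k \le q-2$, and $|S| \ge k$ handles $k \ge 2q-3$; only the intermediate range $q-1 \le k \le 2q-4$ really needs the above inequalities.

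In that intermediate range, combining the bounds on $x_0, x_1, x_{\ge 2}$ with the two incidence identities yields a small linear program in $n_0$, $n_1$, $n_{\ge 2}$ and $\Sigma_2 := \sum_{\sigma(P) \ge 2} \sigma(P)$, whose solution bounds $t$ from above by a concave function of $k$. Plugging this into $|S| \ge k + (q^2+1) - t$ and optimizing over $k$ in the critical range is expected to produce the stated bound $|S| \ge 2q - 4 + \tfrac{8}{q+2}$, with the specific denominator $q+2$ arising from the identity $n_1 = k(q+1) - \Sigma_2$.

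For the second statement---that every optimal resolving set contains at least $2q - 4$ circles when $q \ge 156$---suppose for contradiction an optimal $S$ had $k \le 2q - 5$. A strengthening of the LP above, quantifying how $t$ shrinks as $k$ drops below the critical threshold, would force $m$ to grow enough that $|S| > 2q + 12 \log q$, contradicting $\mu(\mathcal{M}(q)) \le 2q + 12 \log q$ from Theorem \ref{MetDimUpperBound}. The main obstacle is extracting the precise constant $\tfrac{8}{q+2}$ from the LP: the qualitative mechanism (large $k$ fails by $|S|\ge k$, small $k$ fails by the all-zero class) is clear, but pinning down the exact minimum requires a careful balancing of the weight-$1$ and weight-$\ge 2$ contributions in the intermediate regime.
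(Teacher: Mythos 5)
Your skeleton is essentially the paper's argument in disguise (the paper double-counts incidences between points and circles of $S_Z$, using exactly your three observations: at most one outer point with empty characteristic set, at most $|S_Z|$ outer points with a singleton characteristic set, and injectivity of $\chi$ on the outer points), but the proposal stops short of the one computation that actually produces the theorem. You set up the right linear program and then write that it ``is expected to produce the stated bound'' and that extracting $\tfrac{8}{q+2}$ is ``the main obstacle''---that obstacle \emph{is} the proof. For the record, the LP does close: with $n_1+\Sigma_2=k(q+1)$ and $n_{\ge2}\le\Sigma_2/2$ one gets
\[
t\;\le\;1+\min(k,n_1)+n_{\ge 2}\;\le\;1+\min(k,n_1)+\tfrac{1}{2}\bigl(k(q+1)-n_1\bigr)\;\le\;1+\tfrac{k(q+2)}{2},
\]
(the maximum being attained at $n_1=k$), hence $|S|=m+k\ge q^2+1-t+k\ge q^2-\tfrac{kq}{2}$; balancing against $|S|\ge k$ at $k=\tfrac{2q^2}{q+2}$ gives $|S|\ge\tfrac{2q^2}{q+2}=2q-4+\tfrac{8}{q+2}$. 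Note that your second double-counting identity ($\sum_P\binom{\sigma(P)}{2}\le k(k-1)$) and the case split into three ranges of $k$ are not needed; the paper gets the same conclusion from a single count of $\{(P,a):a\in\mathcal Z_S,\ P\in a,\ |[P]\cap\mathcal Z_S|\ge2\}$ restricted to outer points, which yields $k(q+1)-t\ge 2(q^2-t-m)$ and hence $kq\ge 2q^2-2|S|$ directly.

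The second assertion also does not require any ``strengthening'' of the LP: the inequality just derived is equivalent to $|\mathcal Z_S|\ge 2q-2|S|/q$, and substituting the upper bound $|S|\le 2q+12\log q$ from Theorem \ref{MetDimUpperBound} (valid for $q\ge156$) gives $|\mathcal Z_S|\ge 2q-4-24\log(q)/q>2q-5$, i.e.\ $|\mathcal Z_S|\ge 2q-4$. Your contradiction scheme ($k\le 2q-5$ forces $|S|\ge q^2-kq/2\ge 5q/2>2q+12\log q$) would also work once the inequality $|S|\ge q^2-kq/2$ is actually established, but as written both halves of the proposal describe a plan rather than a proof.
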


\begin{proof}
Let $S$ be an optimal resolving set.
Let $\mathcal Z_S$ denote the set of circles, and $\mathcal P_S$ the set of points of $S$.
Let $t$ denote the number of outer points that are covered by one circle:
\[t=|\{P\in\mathcal P\diagdown S\ :\ |[P]\cap S|=1\}|.\]
Then
$t\le|\mathcal Z_S|$, because if there were two outer points $P$ and $Q$ which are covered by
the same circle, and only by that circle, then the constraint of $\{P,Q\}$ would not be
resolved. Also, there could be only one outer point not covered by $\mathcal Z_S$. Let us
double count the set
\[\{(P,a)\in\mathcal P\times\mathcal Z\ :\ a\in\mathcal Z_S\ ,\ |[P]\cap\mathcal Z_S|\ge2\ ,\ P\in a\}\]
to obtain
\[|\mathcal Z_S|(q+1)-t\ge2(q^2+1-t-1-|\mathcal P_S|).\]
By rearranging the inequality we get
\[|\mathcal Z_S|q\ge2(q^2-t-|\mathcal P_S|)+t-|\mathcal Z_S|=2q^2-t-|\mathcal Z_S|-2|\mathcal P_S|
\ge2q^2-2|S|,\]
thus
\begin{equation}
|\mathcal Z_S|\ge2q-2\frac{|S|}q. \label{ZSlowerbound}
\end{equation}
As $|S|\geq |\mathcal Z_S|$, \eqref{ZSlowerbound} yields
\[
|S|\geq \frac{2q^2}{q+2} = 2q-4+\frac{8}{q+2},
\]
which proves the assertion on $|S|$. If $q\ge156$, we can combine \eqref{ZSlowerbound} with the upper bound in Theorem \ref{MetDimUpperBound} to obtain
\[|\mathcal Z_S|\ge2q-2\frac{2q+12\log(q)}q=2q-4-\frac{24\log(q)}q.\]
If $q\ge114$, then $24\log(q)<q$, so for $q\ge156$ we have
\[|\mathcal Z_S|\ge2q-4.\]
\end{proof}

\section{Split-resolving sets of M\"obius planes}
In this section we give a lower and an upper bound for the cardinality of an optimal
split-resolving set of a M\"obius plane.

Let $\mathcal P_S$ and $\mathcal Z_S$ denote the set of points and the set of circles of
a split-resolving set $S$.

\begin{proposition} Let $S$ be an optimal split-resolving set. If $q>5$, then
\[2q-3 \le|\mathcal Z_S|\le2q-2.\]
If $3\le q\le 5$ then
\[|\mathcal Z_S|=2q-2.\]
\end{proposition}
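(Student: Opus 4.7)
The plan is to handle the two inequalities on $|\mathcal{Z}_S|$ separately, with the small-$q$ case emerging from the two bounds meeting.

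For the upper bound, I would recycle the construction $S_1 = (P_1 \cup P_2) \setminus \{a, b\}$ of $2q-2$ circles from the proof of Theorem \ref{MetDimUpperBound}, which was shown there to resolve $\mathcal{P}$. If an optimal split-resolving set $S$ had $|\mathcal{Z}_S| > 2q - 2$, then replacing $\mathcal{Z}_S$ by $S_1$ while keeping $\mathcal{P}_S$ intact yields a new split-resolving set $S' = S_1 \cup \mathcal{P}_S$ (since $S_1$ resolves $\mathcal{P}$ and the untouched $\mathcal{P}_S$ still resolves $\mathcal{Z}$) with $|S'| < |S|$, contradicting optimality. Hence $|\mathcal{Z}_S| \le 2q - 2$.

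For the lower bound, I would combine two elementary observations about $\mathcal{Z}_S$ with a double count. Since $\mathcal{Z}_S$ resolves $\mathcal{P}$: (a) at most one point can miss every circle of $\mathcal{Z}_S$, and (b) for each $z \in \mathcal{Z}_S$, at most one point can lie on $z$ and on no other circle of $\mathcal{Z}_S$, since otherwise two such points would share an incidence pattern. Writing $n_i$ for the number of points meeting exactly $i$ circles of $\mathcal{Z}_S$, these give $n_0 \le 1$ and $n_1 \le |\mathcal{Z}_S|$, while $\sum_i n_i = q^2 + 1$. Double counting point-circle incidences then yields
\[
|\mathcal{Z}_S|(q+1) \;=\; \sum_{i \ge 0} i\, n_i \;\ge\; n_1 + 2\bigl(q^2 + 1 - n_0 - n_1\bigr) \;\ge\; 2(q^2 + 1) - 2 - |\mathcal{Z}_S|,
\]
from which I read off $|\mathcal{Z}_S| \ge 2q^2/(q+2) = 2q - 4 + 8/(q+2)$.

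Taking the ceiling then finishes both cases. For $q \ge 6$ the fraction $8/(q+2)$ is at most $1$, so $|\mathcal{Z}_S| \ge 2q - 3$, which together with the upper bound settles $q > 5$. For $q \in \{3, 4, 5\}$ the fraction exceeds $1$, forcing the ceiling up to $2q - 2$, which matches the upper bound and yields equality. The only real subtlety is the swap argument for the upper bound, where one must notice that $\mathcal{P}_S$ is left untouched so its ability to resolve $\mathcal{Z}$ is preserved; the rest is a routine counting exercise.
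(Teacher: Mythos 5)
Your proposal is correct and follows essentially the same route as the paper: the upper bound reuses the $2q-2$ circle construction from Theorem \ref{MetDimUpperBound}, and the lower bound is the same double count of point--circle incidences with the observations that at most one point is uncovered and each circle carries at most one ``privately covered'' point, yielding $|\mathcal Z_S|\ge 2q-4+\frac{8}{q+2}$ and the case split on $q$. Your explicit exchange argument for why optimality forces $|\mathcal Z_S|\le 2q-2$ is a nice touch that the paper leaves implicit, but the substance is identical.
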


\begin{proof}
We can use the same construction as in the proof of Theorem \ref{MetDimUpperBound}, where
we gave a circle set which resolves $\mathcal P$ with $2q-2$ circles.

To obtain a lower bound, we can do almost the same as in the proof of Theorem
\ref{MetDimLowerBound}.
Let $t$ denote the number of points that are covered by one circle. Then
$|\mathcal Z_S|\ge t$, and there could be only one point not covered by $\mathcal Z_S$.
Let us double count the set
\[\{(P,a)\in\mathcal P\times\mathcal Z\ :\ a\in\mathcal Z_S\ ,\ |[P]\cap\mathcal Z_S|\ge2\ ,\ P\in a\}.\]
\[|\mathcal Z_S|(q+1)-t\ge2(q^2+1-t-1),\]
by using the upper bound for $t$
\[|\mathcal Z_S|(q+1)\ge2q^2-t\ge2q^2-|\mathcal Z_S|,\]
thus
\[|\mathcal Z_S|(q+2)\ge2q^2,\]
and the obtained lower bound is
\[|\mathcal Z_S|\ge2q-4+\frac8{q+2}.\]
If $q<6$ then $\frac8{q+2}>1$, therefore,
\[\textrm{if } q\in\{3,4,5\} \textrm{, then } |\mathcal Z_S|\ge2q-2,\]
\[\textrm{if } q>5 \textrm{, then } |\mathcal Z_S|\ge2q-3.\]

\end{proof}

\begin{proposition}\label{PropBoundsPS}
If $S$ is an optimal split-resolving set, then
\[3q-7 \le|\mathcal P_S|\le\frac{q+2}2\left(1+\log\left(q^5\right)\right).\]
\end{proposition}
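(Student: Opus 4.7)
The proposition has two inequalities, both really about the minimum cardinality of a subset $\mathcal P_S\subseteq\mathcal P$ that resolves $\mathcal Z$, since in an optimal split-resolving set $\mathcal P_S$ may be taken to be a smallest such set (and $\mathcal Z_S$ a smallest set resolving $\mathcal P$, as in the previous proposition).

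For the upper bound, my plan is a direct Lov\'asz-type argument analogous to the circle-side bound of Theorem~\ref{MetDimUpperBound}. I will construct a hypergraph $H'$ on vertex set $\mathcal P$ whose hyperedges are the symmetric differences $a\triangle b$ for each unordered pair of distinct circles $a,b$; then $T\subseteq\mathcal P$ resolves $\mathcal Z$ iff $T$ is a blocking set of $H'$. The smallest hyperedge has size $2q-2$, attained when $|a\cap b|=2$, and the degree of a point $P\in\mathcal P$ equals the number of unordered circle-pairs containing $P$ in exactly one member, which is $q(q+1)\cdot q^2(q-1)=q^5-q^3<q^5$. Setting $x_P=\tfrac{1}{2q-2}$ uniformly gives a feasible fractional blocking of value $\tfrac{q^2+1}{2q-2}$, and a short check shows this is at most $\tfrac{q+2}{2}$ as soon as $q\ge 3$. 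Applying Theorem~\ref{LovThm} then yields the claimed bound $\tfrac{q+2}{2}(1+\log q^5)$.

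For the lower bound my plan is a double counting argument, in the spirit of the proof of Theorem~\ref{MetDimLowerBound}. Let $t_i$ denote the number of circles $c$ with $|c\cap\mathcal P_S|=i$. Since distinct circles must have distinct $\mathcal P_S$-fingerprints, I immediately get $t_0\le 1$, $t_1\le|\mathcal P_S|$ and $t_2\le\binom{|\mathcal P_S|}{2}$ (two circles sharing an empty, singleton or pair fingerprint would be indistinguishable). Counting $(P,c)$-incidences with $P\in\mathcal P_S$ and bounding $i\ge 3$ on the tail,
\[|\mathcal P_S|\,q(q+1)=\sum_{i\ge 1}i\,t_i\ge t_1+2t_2+3\bigl(q(q^2+1)-t_0-t_1-t_2\bigr),\]
and substituting the three upper bounds on $t_0,t_1,t_2$ rearranges to the single quadratic inequality
\[|\mathcal P_S|(q^2+q+2)+\binom{|\mathcal P_S|}{2}\ge 3q^3+3q-3.\]

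The remaining step is to solve this quadratic for $|\mathcal P_S|$; its dominant balance is $|\mathcal P_S|\cdot q^2\gtrsim 3q^3$, so $|\mathcal P_S|\ge 3q-O(1)$, and the precise constant $-7$ emerges after checking by substitution that $|\mathcal P_S|=3q-8$ violates the inequality for large $q$ while $|\mathcal P_S|=3q-7$ survives (for small $q$ the stated bound follows trivially or from the same check). This arithmetic step is where I expect the main bookkeeping obstacle; the incidence count and the blocking-set reformulation are essentially direct adaptations of arguments already in the paper.
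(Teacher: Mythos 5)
Your proposal follows the paper's proof essentially step for step: the same symmetric-difference hypergraph on $\mathcal P$ with minimum hyperedge size $2q-2$ and point degree $q^5-q^3$, the same uniform fractional solution of value $\frac{q^2+1}{2q-2}\le\frac{q+2}{2}$ fed into Theorem~\ref{LovThm}, and for the lower bound the same bounds $t_0\le1$, $t_1\le|\mathcal P_S|$, $t_2\le\binom{|\mathcal P_S|}{2}$ and double count leading to exactly the paper's quadratic inequality. The one step you defer is precisely what the paper carries out: substituting $3q-8$ yields $q^2+61q-46\le0$, whose roots are both below $2$, so the inequality fails for every admissible $q$ and monotonicity of the left-hand side gives $|\mathcal P_S|\ge3q-7$.
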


\begin{proof}
Since the bounds are trivial for $q=2$, we can assume that $q\ge3$.
First we prove the upper bound. Let us construct a hypergraph $H=(\mathcal P,E')$ such that for
every two different circle $a$ and $b$ we construct a hyperedge $e_{a, b}$ which contains
all points $P$ for which $d(P,a)\ne d(P,b)$. That means a point $P\in\mathcal P$ is in
$e_{a, b}$ if and only if $P$ is incident with $a$ or $b$, but not with both.

Let $a$ and $b$ be two different circles. Setting all the variables to $\frac1{2q-2}$, all
the constraints hold because there are at least $2q-2$ vertices in any hyperedge of $H$.
In this case the object value is
\[\tau^*=\frac1{2q-2}(q^2+1)=\frac q2+\frac12+\frac1{q-1}\le\frac q2+1.\]

In the hypergraph $H$ the degree of a point is
$(q^2+q)(q^3-q^2)=q^5-q^3<q^5$.
By Theorem \ref{LovThm},
\[\frac{q+2}2\left(1+\log\left(q^5\right)\right)\]
vertices resolve all the constraints.

To have a lower bound, let
\[t_k=|\{z\in\mathcal Z\ :\ |z\cap S|=k\}|\hspace{1cm}k\in\{0,1,2\}\]
There can be only one unblocked circle, and for every $P\in\mathcal P_S$ there are at
most one circle that is blocked by only $P$. Thus
\[t_0\le1,\hspace{1.8cm}t_1\le|\mathcal P_S|.\]
For any two points $P_1,P_2\in\mathcal P_S$ there are at most one double bocked circle
$z$ that is blocked by $P_1$ and $P_2$. Thus
\[t_2\le{|\mathcal P_S| \choose 2}.\]
Let us double count the set
\[\{(P,z)\in\mathcal P\times\mathcal Z\ :\ P\in\mathcal P_S\ ,\ |z\cap\mathcal P_S|\ge3\ ,\ P\in z\}\]
to get
\[|\mathcal P_S|(q^2+q)-t_1-2t_2\ge 3(q^3+q-t_0-t_1-t_2).\]
By using the upper bound for $t_0$, $t_1$ and $t_2$
\[|\mathcal P_S|(q^2+q)\ge 3(q^3+q-1)-2|\mathcal P_S|-{|\mathcal P_S| \choose 2}.\]
This yields the quadratic inequality
\[|\mathcal P_S|^2+|\mathcal P_S|(2q^2+2q+3)+6(1-q^3-q)\ge 0.\]
If we substitute $3q-8$ into $|\mathcal P_S|$, we get the inequality
\[q^2+61q-46\le0.\]
Since both roots are less than 2, this inequality does not hold. Thus
\[|\mathcal P_S|\ge 3q-7.\]
\end{proof}

The corollary of the above propositions is the following.
\begin{theorem} If $S$ is an optimal split-resolving set of $\mathcal M(q)$, then
\[5q-10 \le|S|\le \frac{q+2}2\left(1+\log\left(q^5\right)\right)+2q-2.\]
\end{theorem}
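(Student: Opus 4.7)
The statement is explicitly flagged as a corollary of the two preceding propositions, so the plan is simply to combine them, using the partition $|S|=|\mathcal P_S|+|\mathcal Z_S|$ induced by the bipartition of the vertex set.

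For the upper bound, my plan is to produce an explicit split-resolving set of the desired size by \emph{concatenating} the two constructions already given. The proof of Theorem~\ref{MetDimUpperBound} (and equivalently the first proposition of this section) exhibits a set $S_1\subseteq \mathcal Z$ of $2q-2$ circles that resolves $\mathcal P$. The proof of Proposition~\ref{PropBoundsPS} produces, via the Lovász bound of Theorem~\ref{LovThm} applied to the hypergraph whose edges are the sets $e_{a,b}\cap\mathcal P$, a set $S_2\subseteq\mathcal P$ of at most $\tfrac{q+2}{2}(1+\log(q^5))$ points that resolves $\mathcal Z$. By the definition of split-resolving set, $S_1\cup S_2$ is split-resolving, so an optimal $S$ satisfies the claimed upper bound.

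For the lower bound, I would apply the lower bound halves of the two propositions to the optimal $S$ and add them. The first proposition gives $|\mathcal Z_S|\ge 2q-3$ when $q>5$, and $|\mathcal Z_S|=2q-2\ge 2q-3$ in the small cases $q\in\{3,4,5\}$, so the inequality $|\mathcal Z_S|\ge 2q-3$ is uniform for $q\ge 3$. The case $q=2$ is trivial because $5q-10=0$. Proposition~\ref{PropBoundsPS} supplies $|\mathcal P_S|\ge 3q-7$. Summing yields $|S|\ge(2q-3)+(3q-7)=5q-10$.

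There is no real obstacle; the only thing worth double-checking is that the lower-bound arguments in the two propositions use only the split-resolving property of $S$ (not any optimality assumption), which is the case by inspection of the double-counting arguments. Hence the bounds apply to our optimal $S$ term by term, and the theorem follows.
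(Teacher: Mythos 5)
Your proposal matches the paper exactly: the paper states this theorem as an immediate corollary of the two preceding propositions, obtained by adding the bounds for $|\mathcal Z_S|$ and $|\mathcal P_S|$, which is precisely what you do. Your extra checks (uniformity of $|\mathcal Z_S|\ge 2q-3$ across small $q$, triviality at $q=2$, and that the lower-bound arguments use only the split-resolving property) are correct and only make the corollary more explicit.
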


Note that the bound $t_0\le1$, in the proof of Proposition \ref{PropBoundsPS} implies
that the set $\mathcal P_S$ blocks all circles with one possible exception. This
means that there is a point $P\in\mathcal P$ such that $B=\mathcal P_S\cup \{P\}$
is a blocking set of the M\"obius plane. In \cite{LowBoundBlockSet} Bruen and Rothschild
proved that if $B$ is a blocking set of the M\"obius plane of order $q\ge9$, then
$|B|\ge2q$, thus
\[\mathrm{if }\ q\ge9,\ \mathrm{ then }\ |\mathcal P_S|\ge2q-1.\]

Up to our knowledge, the best upper bound 
for the size of a blockig set in a Mobius plane of order $q$
is given by Greferath and R\"ossing in \cite{GrefRos}.
They proved that there exists a blocking set that has approximately $3q\log(q)$ points.
We prove that there exists a blocking set of size approximately $2q\log(q)$.

\begin{theorem}\label{BlockingUpperBound}
If $B$ is an optimal blocking set of $\mathcal{M}(q)$, then
\[|B|<\frac{q^2+1}{q+1}\left(1+\log(q (q+1) )\right).\]
\end{theorem}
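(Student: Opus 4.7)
The plan is to apply Theorem \ref{LovThm} directly to the hypergraph $(\mathcal P, \mathcal Z)$ where the points are the vertices and the circles are the hyperedges. This hypergraph is highly regular, so computing the two quantities needed for Lovász's theorem, namely a fractional blocking value $\tau^*$ and the maximum degree $d$, should be straightforward.

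For the maximum degree: in a M\"obius plane of order $q$ every point lies on exactly $q(q+1)$ circles (as recalled in the paragraph following Definition \ref{mobiusdef}), so $d=q(q+1)$. For the fractional bound: since every circle contains exactly $q+1$ points, setting $x_P = \frac{1}{q+1}$ for every point $P$ satisfies every constraint
\[\sum_{P\in z}x_P = (q+1)\cdot\frac{1}{q+1} = 1,\]
with objective value
\[\tau^* \le \sum_{P\in\mathcal P}x_P = \frac{q^2+1}{q+1}.\]

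Feeding these into Theorem \ref{LovThm} yields
\[|B|=\tau < \tau^*(1+\log d) \le \frac{q^2+1}{q+1}\left(1+\log(q(q+1))\right),\]
which is exactly the claimed bound. There is essentially no obstacle here; the only judgement call is the choice of the uniform fractional solution, which is natural and optimal for uniform hypergraphs of this type.
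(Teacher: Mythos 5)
Your proposal is correct and follows exactly the paper's own argument: the same hypergraph, the same uniform fractional solution $x_P=\frac{1}{q+1}$ giving $\tau^*\le\frac{q^2+1}{q+1}$, the same degree $d=q(q+1)$, and the same application of Theorem \ref{LovThm}. No issues.
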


\begin{proof}
Let $\mathcal M(q)=(\mathcal P, \mathcal Z)$ be a M\"obius plane.
We consider the points as variables and circles as constraints of an LP, that is, the
constraints are the inequalities
\[\sum_{P\in z} x_P\ge1\]
for every circle $z$.
First we give a fractional solution. Since every circles has $q+1$ points,
if we set all variables to $\frac1{q+1}$, then all constraints hold with equality. There
are $q^2+1$ variables, so the objective value is
\[\tau^*=\frac{q^2+1}{q+1}.\]
All point is incident with the same number of circles, thus the degree $d$ of the
hypergraph is the number of circles incident with a point:
\[d=q(q+1).\]
Using Theorem \ref{LovThm}, we get the upper bound
\[|B| < \tau^*(1+\log(d))=\frac{q^2+1}{q+1}\left(1+\log(q (q+1) )\right).\]
\end{proof}

\section{Results for small orders}

In this section we deal with optimal resolving sets and split-resolving sets for
M\"obius planes of small order. Let us consider first $\mathcal{M}(2)$ in detail. 
We use the construction that we gave in Section \ref{MobiusSection}.

\begin{lemma}\label{ThreePointLemma}
For any three different points $A,B$ and $C$ there is no
circle which resolves all the constraints $\{A,B\}$, $\{A,C\}$ and $\{B,C\}$.
\end{lemma}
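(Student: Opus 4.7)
The plan is to reduce the statement to a simple pigeonhole argument on the incidence pattern of the three points with respect to the candidate circle. Nothing about the order of the M\"obius plane or the combinatorial counts from Lemma \ref{LemmaOfCordinalities} should actually be needed; the obstruction is already visible at the level of the distance table for the incidence graph.

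First I would recall the distance formula for $d(X,z)$ from Section \ref{MobiusSection}: for any point $X$ and circle $z$, $d(X,z)=1$ if $X\in z$ and $d(X,z)=3$ otherwise. Consequently, a circle $z$ resolves a pair $\{X,Y\}$ of distinct points if and only if $|\{X,Y\}\cap z|=1$, that is, exactly one of $X$ and $Y$ lies on $z$.

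Now suppose for contradiction that some circle $z$ simultaneously resolves $\{A,B\}$, $\{A,C\}$ and $\{B,C\}$. Encode the incidence of the three points with $z$ as a function $f\colon\{A,B,C\}\to\{0,1\}$ with $f(X)=1$ iff $X\in z$. The three hypothesised resolvings translate into $f(A)\ne f(B)$, $f(A)\ne f(C)$ and $f(B)\ne f(C)$, i.e. $f$ is a proper $2$-colouring of the complete graph on $\{A,B,C\}$. By the pigeonhole principle two of the three values $f(A),f(B),f(C)\in\{0,1\}$ must coincide, contradicting one of the three inequalities. Hence no such circle exists.

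The only subtlety worth a sentence in the write-up is to make explicit the reduction from $d(\,\cdot\,,z)$ values to the indicator $f$; after that the argument is a two-line pigeonhole. I do not expect any genuine obstacle.
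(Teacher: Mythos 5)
Your proof is correct and is essentially the paper's argument in different clothing: the paper's WLOG case analysis ($A\in z$, $B\notin z$, then either value of $C$'s incidence fails a constraint) is exactly your observation that $\{0,1\}$ cannot properly $2$-colour $K_3$. No substantive difference.
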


\begin{proof}
Let us check if a circle $z$ can resolve all the considered constraints.
Without loss of generality we may assume that $A\in z$ and $B\not\in z$, so $z$ resolves
$\{A,B\}$. If $C\in z$ then $\{A,C\}$ is not resolved and if $C\not\in z$ then
$\{B,C\}$ is not resolved by $z$.
\end{proof}

We use again the notations $\mathcal P_S$ and $\mathcal Z_S$ to denote the set of points
and the set of circles of a resolving or split-resolving set $S$.

\begin{theorem} $\mu(\mathcal M(2))=4$.
\end{theorem}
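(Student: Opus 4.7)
The plan is to pin down $\mu(\mathcal M(2))$ by exhibiting a resolving set of size $4$ and ruling out every $3$-element set.

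\medskip

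\emph{Upper bound.} I would take $S=\{P_1,P_2,P_3,P_4\}$ to be any four of the five points of $\mathcal M(2)$ (say $S=\{1,2,3,4\}$ in the model where $\mathcal Z$ consists of all $3$-subsets of $\{1,\dots,5\}$). Then I would simply list the codes $(d(v,P_1),d(v,P_2),d(v,P_3),d(v,P_4))$ for every vertex $v$ and check they are pairwise distinct. The lone outer point $P_5$ has code $(2,2,2,2)$. An outer circle not containing $P_5$ is a $3$-subset of $\{P_1,\dots,P_4\}$, giving four circles whose codes contain three $1$'s and a single $3$ in the position of the missing $P_i$. An outer circle through $P_5$ is determined by a pair $\{P_i,P_j\}\subset\{P_1,\dots,P_4\}$, and its code has $1$'s in positions $i,j$ and $3$'s elsewhere; since different pairs give different codes, the six such circles are pairwise distinguished. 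The four points of $S$ all carry a $0$ in some coordinate, so they stand out as well.

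\medskip

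\emph{Lower bound.} The key structural fact is Lemma \ref{LemmaOfCordinalities}(5) specialized to $q=2$: the number of circles skew to a given one is $\tfrac{q^3-3q^2+2q}{2}=0$, so any two distinct circles of $\mathcal M(2)$ meet. Hence for any two distinct circles $w_1,w_2$ and any circle $z\in\mathcal Z$, we have $d(z,w_i)\in\{0,2\}$, and as soon as $w_1,w_2\notin\{z\}$ both distances equal $2$. Consequently, a circle inside a candidate resolving set $S$ never helps distinguish two circles that lie outside $S$: only the points of $S$ can do that. Now suppose $|S|=3$, and write $k=|S\cap\mathcal P|$, $3-k=|S\cap\mathcal Z|$. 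The outer circles, $10-(3-k)=7+k$ in number, must receive pairwise distinct codes in $\{1,3\}^k$ (i.e. $2^k$ possible patterns of point-incidences with the $k$ points of $S$). A direct check
\[
k=0:\ 1<7,\quad k=1:\ 2<8,\quad k=2:\ 4<9,\quad k=3:\ 8<10
\]
shows $2^k<7+k$ in every case, so two outer circles are forced to share a code. Thus no three-element set resolves $\mathcal M(2)$, which gives $\mu(\mathcal M(2))\ge 4$. Combined with the construction above, $\mu(\mathcal M(2))=4$.

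\medskip

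\emph{Main obstacle.} There is no serious obstacle here; the real work is recognizing the structural simplification that every two circles of $\mathcal M(2)$ meet, which collapses the circle part of any $S$ into being useless for separating outer circles and reduces the lower bound to the trivial pigeonhole inequality $2^k<7+k$. After that, the explicit check of the $4$-point construction is routine.
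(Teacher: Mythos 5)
Your proof is correct, and the lower-bound half takes a genuinely different (and cleaner) route than the paper's. Both arguments hinge on the same structural fact -- in $\mathcal M(2)$ any two distinct circles meet (Lemma \ref{LemmaOfCordinalities}(5) with $q=2$), so a circle of $S$ is at distance $2$ from every outer circle and hence useless for separating them. But from that point the paper proceeds by double counting the incidences between $\mathcal P_S$ and the blocked outer circles, obtaining $6|\mathcal P_S|\ge 9-|\mathcal Z_S|$, and then finishes with a case analysis on $|\mathcal P_S|\in\{0,1,2,3\}$ that needs the auxiliary Lemma \ref{ThreePointLemma} for the case $|\mathcal P_S|=2$. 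You instead observe that the code of each of the $7+k$ outer circles is determined by its incidence pattern with the $k$ points of $S$, of which there are only $2^k$, and the pigeonhole inequality $2^k<7+k$ for $k\le 3$ kills all cases at once, with no auxiliary lemma. Your argument is shorter and self-contained; the paper's version has the virtue of being the same double-counting template it uses for general $q$ in Theorem \ref{MetDimLowerBound}. The upper bounds are essentially identical (any four points resolve), with your explicit code listing replacing the paper's two-line verification; both implicitly use that $\mathcal M(2)$ is the unique plane on five points with all $3$-subsets as circles, which the paper's Example justifies.
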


\begin{proof} We prove that any four element subset of $\mathcal P$ is a
resolving set. For any two different points $P$ and $Q$ one can assume
$P\in S$, so $d(P,P)=0\ne2=d(P,Q)$. For any two different circles $a$ and $b$ there are points
$A\in a\diagdown b$ and $B\in b\diagdown a$. We can assume that $A\in S$. Then
$d(a,A)=1\ne3=d(b,A)$.

To get a lower bound for a resolving set $S$ we double count the set
\[\{(P,z)\in\mathcal P\times \mathcal Z\ :\ P\in S\ \land\ P\in z\}.\]
Every point is an element of six circles, so the set has $6|\mathcal P_S|$ elements. Note that in $\mathcal{M}(2)$, any two circles intersect, so two distinct circles cannot be resolved by a third circle. Hence there
can be at most one unblocked outer circle. So we have the inequality
\[6|\mathcal P_S| \ge 9-|\mathcal Z_S|.\]
If $|\mathcal P_S|=0$ then $|\mathcal Z_S|\ge9$ and if $|\mathcal P_S|=1$ then
$|\mathcal Z_S|\ge3$.
If $|\mathcal P_S|=2$ then by Lemma \ref{ThreePointLemma}, $|\mathcal Z_S|\ge2$. Finally,
if $|\mathcal P_S|=3$, then the constraint of the two outer points is not resolved by
$\mathcal P_S$, so $|\mathcal Z_S|\ge1$.
\end{proof}

\begin{theorem} If $S$ is an optimal split-resolving set of $\mathcal M(2)$, then
\[|\mathcal P_S|=4\hspace{.5cm}\textrm{ and }\hspace{.5cm}|\mathcal Z_S|=3.\]
\end{theorem}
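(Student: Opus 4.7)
The plan is to derive the two lower bounds $|\mathcal{P}_S|\ge 4$ and $|\mathcal{Z}_S|\ge 3$ separately for any split-resolving set $S$ of $\mathcal{M}(2)$, and then exhibit an explicit example with $|\mathcal{P}_S|=4$ and $|\mathcal{Z}_S|=3$. The two bounds together give $|S|\ge 7$, and the construction shows this is tight, so every optimal split-resolving set attains equality in both. I would rely throughout on the reformulation that $\mathcal{P}_S$ resolves a pair of circles $\{a,b\}$ iff $\mathcal{P}_S\cap(a\triangle b)\ne\emptyset$, and $\mathcal{Z}_S$ resolves a pair of points $\{P,Q\}$ iff some $z\in\mathcal{Z}_S$ contains exactly one of them.

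For the lower bound on $|\mathcal{P}_S|$, I would argue by contradiction. Recall that in $\mathcal{M}(2)$ every three-element subset of $\mathcal{P}$ is a circle. If $|\mathcal{P}_S|\le 3$, the outer point set $T=\mathcal{P}\setminus\mathcal{P}_S$ has at least two points $X,Y$, and since $|\mathcal{P}|=5$ we can choose two further distinct points $P,Q\in\mathcal{P}\setminus\{X,Y\}$. Then $a:=\{X,P,Q\}$ and $b:=\{Y,P,Q\}$ are distinct circles with $a\triangle b=\{X,Y\}\subseteq T$, so $\mathcal{P}_S$ cannot resolve the pair $\{a,b\}$.

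For the lower bound on $|\mathcal{Z}_S|$, I would use a pigeonhole argument: each point of $\mathcal{P}$ has an incidence pattern in $\{0,1\}^{|\mathcal{Z}_S|}$ with the circles of $\mathcal{Z}_S$, and two points are resolved iff their patterns differ. With $|\mathcal{Z}_S|\le 2$ there are at most $4$ possible patterns for $5$ points, so two points collide and are unresolved. (Alternatively, one could apply Lemma \ref{ThreePointLemma} to any three points.) For the matching construction, take $\mathcal{P}=\{1,2,3,4,5\}$ with $\mathcal{Z}$ all triples, set $\mathcal{P}_S=\{1,2,3,4\}$, and $\mathcal{Z}_S=\bigl\{\{1,2,3\},\{1,2,4\},\{1,3,4\}\bigr\}$. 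The incidence patterns of $1,2,3,4,5$ under $\mathcal{Z}_S$ are $(1,1,1),(1,1,0),(1,0,1),(0,1,1),(0,0,0)$, all distinct, so $\mathcal{Z}_S$ resolves $\mathcal{P}$; and since $|a\triangle b|$ is always even and positive for distinct circles $a,b$ while $\mathcal{P}\setminus\mathcal{P}_S=\{5\}$, the symmetric difference always meets $\mathcal{P}_S$, so $\mathcal{P}_S$ resolves $\mathcal{Z}$. The proof is essentially a finite verification; the only mildly non-trivial step is ruling out $|\mathcal{P}_S|=3$, which relies crucially on the fact that every triple of points in $\mathcal{M}(2)$ is a circle.
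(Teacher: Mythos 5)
Your proposal is correct and follows essentially the same route as the paper: the same counterexample pair of circles through two outer points for $|\mathcal P_S|\ge 4$, the same explicit set $\{\{1,2,3\},\{1,2,4\},\{1,3,4\}\}$ and four-point set for the matching construction, and a lower bound of $3$ on $|\mathcal Z_S|$. The only cosmetic difference is that you use a pigeonhole count of incidence patterns where the paper invokes Lemma \ref{ThreePointLemma}; both work, and you even note the lemma as an alternative.
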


\begin{proof}
We already proved that any four-element point set resolves the set of circles.
Suppose that there are at most three points in the set $\mathcal P_S$. Without loss of
generality we may assume that $1$ and $2$ are outer points. Then the circles $\{1,3,4\}$
and $\{2,3,4\}$ are at the same distance from any points of $\mathcal P_S$.

Now let us consider the set $\mathcal Z_S$.
We may assume that $\{1,2,3\}\in\mathcal Z_S$. This circle does not resolve the constraints
$\{1,2\},\{1,3\}$ and $\{2,3\}$. So by Lemma \ref{ThreePointLemma}, we need at least two
more circles.

We prove that the set $\{\{1,2,3\}, \{1,2,4\}, \{1,3,4\}\}$ resolves the set of points.
The circle $\{1,2,3\}$ resolves every constraint $\{A,B\}$, where $A\in\{1,2,3\}$ and
$B\in\{4,5\}$. So $\{4,5\}$, $\{2,3\}$, $\{1,3\}$ and $\{1,2\}$ are the constraints not resolved by
$\{1,2,3\}$. The first three is resolved by $\{1,2,4\}$ and the last one by $\{1,3,4\}$.
\end{proof}

We investigated Miquelian planes and obtained results for small orders. We used Gurobi \cite{Gurobi}
to solve the problems. The optimals of resolving sets and split resolving sets are summarized
in the following table:
\begin{center}
\begin{tabular}{|c|c|c|}
\hline
    Order of  &  Resolving     & Split-  \\
   the plane  &  set           & resolving set  \\
\hline
   3          &   8     & 11 \\
\hline
   4          & 11   &   15 \\
\hline
   5          & MIN:9 \ \ \ \ MAX:13   &  21 \\
\hline
\end{tabular}
\end{center}

\end{document}